
\documentclass[a4,12pt]{amsart}
\oddsidemargin 0mm
\evensidemargin 0mm
\topmargin 0mm
\textwidth 160mm
\textheight 230mm
\tolerance=9999
\usepackage{amssymb,amstext,amsmath,amscd,amsthm,amsfonts,enumerate,latexsym}
\usepackage{color}
\usepackage[dvipdfmx]{graphicx}
\usepackage[all]{xy}

\usepackage[dvipdfmx]{hyperref}

\theoremstyle{plain}
\newtheorem{thm}{Theorem}

\newtheorem*{thm*}{Theorem}
\newtheorem*{cor*}{Corollary}

\newtheorem{prop}[thm]{Proposition}

\newtheorem{lem}[thm]{Lemma}
\newtheorem{cor}[thm]{Corollary}

\newtheorem*{claim*}{Claim}

\theoremstyle{definition}

\newtheorem{ex}[thm]{Example}

\theoremstyle{remark}
\newtheorem{rem}[thm]{Remark}

\numberwithin{equation}{thm}


\def\Ext{\operatorname{Ext}}

\def\Ker{\operatorname{Ker}}

\def\Hom{\operatorname{Hom}}

\newcommand{\rmE}{\mathrm{E}}

\newcommand{\rmJ}{\mathrm{J}}
\newcommand{\rmK}{\mathrm{K}}

\newcommand{\fkp}{\mathfrak{p}}

\newcommand{\mapright}[1]{%
\smash{\mathop{%
\hbox to 1cm{\rightarrowfill}}\limits^{#1}}}

\newcommand{\mapleft}[1]{%
\smash{\mathop{%
\hbox to 1cm{\leftarrowfill}}\limits_{#1}}}

\def\Spec{\operatorname{Spec}}

\title[A criterion for reflexivity of modules]{A criterion for reflexivity of modules}

\author[Naoki Endo]{Naoki Endo}
\address{Department of Mathematics, Faculty of Science Division II, Tokyo University of Science, 1-3 Kagurazaka, Shinjuku, Tokyo 162-8601, Japan}
\email{nendo@rs.tus.ac.jp}
\urladdr{https://www.rs.tus.ac.jp/nendo/}

\author[Shiro Goto]{Shiro Goto}
\address{Department of Mathematics, School of Science and Technology, Meiji University, 1-1-1 Higashi-mita, Tama-ku, Kawasaki 214-8571, Japan}
\email{shirogoto@gmail.com}

\thanks{2020 {\em Mathematics Subject Classification.} 13C13, 13D30, 13C12.}
\thanks{{\em Key words and phrases.} reflexive module, $q$-torsionfree module, semi-local ring}

\thanks{The first author was partially supported by JSPS Grant-in-Aid for Young Scientists 20K14299. The second author was partially supported by JSPS Grant-in-Aid for Scientific Research (C) 21K03211.}


\begin{document}

\maketitle

\setlength{\baselineskip} {14.5pt}

\begin{abstract}
Let $M$ be a finitely generated module over a ring $\Lambda$. With certain mild assumptions on $\Lambda$, it is proven that $M$ is a reflexive $\Lambda$-module, once $M \cong M^{**}$ as a $\Lambda$-module.
\end{abstract}

\vspace{2em}

Let $\Lambda$ be a ring. For each left $\Lambda$-module $X$, let $X^* = \Hom_\Lambda(X,\Lambda)$ denote the $\Lambda$-dual of $X$. This note aims at reporting the following.

\begin{prop}\label{1}
Let $\Lambda$ be a ring and let $M$ be a finitely generated left $\Lambda$-module. Assume that one of the following conditions is satisfied.
\begin{enumerate}
\item[$(1)$] $\Lambda$ is a left Noetherian ring.
\item[$(2)$] $\Lambda$ is a semi-local ring, that is $\Lambda/\rmJ(\Lambda)$ is semi-simple, where $\rmJ(\Lambda)$ denotes the Jacobson radical of $\Lambda$.
\item[$(3)$] $\Lambda$ is a module-finite algebra over a commutative ring $R$.
\end{enumerate}
Then, $M$ is a reflexive $\Lambda$-module, that is the canonical map $M \overset{h_M}{\to} M^{**}$
is an isomorphism if and only if there is at least one isomorphism $M \cong M^{**}$ of $\Lambda$-modules.
\end{prop}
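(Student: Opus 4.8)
The forward implication is immediate: when $M$ is reflexive the canonical map $h_M$ is itself an isomorphism $M\cong M^{**}$. So the plan is to prove the converse. Fix an isomorphism $\phi\colon M\xrightarrow{\sim}M^{**}$ of $\Lambda$-modules. The first step I would take is to transport the problem from $M$ to its bidual using naturality of $h$. Applying the natural transformation $h\colon\mathrm{id}\to(-)^{**}$ to the morphism $\phi$ gives the commuting relation $h_{M^{**}}\circ\phi=\phi^{**}\circ h_M$, hence $h_M=(\phi^{**})^{-1}\circ h_{M^{**}}\circ\phi$. Since $\phi$ and $\phi^{**}$ are isomorphisms, this shows that $h_M$ is an isomorphism if and only if $h_{M^{**}}$ is. Thus it suffices to prove that the bidual $M^{**}$ is reflexive, and the point of this reduction is that $M^{**}$ is a \emph{dual} module, which is much better behaved.

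Next I would exploit exactly this. The triangle identity $(h_X)^{*}\circ h_{X^{*}}=\mathrm{id}_{X^{*}}$, valid for every $\Lambda$-module $X$ (a one-line evaluation check), applied to $X=M^{*}$ shows that $h_{M^{**}}\colon M^{**}\to M^{****}$ is a split monomorphism, with retraction $(h_{M^{*}})^{*}$. Consequently its cokernel is isomorphic to a direct summand of $M^{****}$: writing $C=\operatorname{coker}(h_{M^{**}})$, we get $M^{****}\cong M^{**}\oplus C$. On the other hand, dualizing $\phi$ twice yields an isomorphism $\phi^{**}\colon M^{**}\xrightarrow{\sim}M^{****}$, so combining these gives $M^{**}\cong M^{**}\oplus C$. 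Note also that $C$ is finitely generated, being a direct summand of $M^{****}\cong M^{**}\cong M$.

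It then remains only to force $C=0$, and this is the one place where the hypotheses on $\Lambda$ are used; it is also the main obstacle. What is needed is that every finitely generated $\Lambda$-module $P$ is \emph{directly finite}, meaning that an isomorphism $P\cong P\oplus C$ with $C$ finitely generated entails $C=0$. Granting this for $P=M^{**}$, one gets $C=0$, so $h_{M^{**}}$ is surjective; being split injective it is then an isomorphism, and by the reduction of the first paragraph $h_M$ is an isomorphism, proving $M$ reflexive. The remaining task is to verify direct finiteness in each of the three cases. In case $(1)$ a finitely generated module over a left Noetherian ring is Noetherian, hence Hopfian, hence directly finite. In case $(3)$, $M$ is finitely generated over the commutative ring $R$, and finitely generated modules over commutative rings are directly finite (indeed Hopfian, by Vasconcelos' theorem); this is inherited by the $\Lambda$-structure since the isomorphism $P\cong P\oplus C$ is in particular $R$-linear. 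The case $(2)$, which I expect to be the subtlest, I would settle by reduction modulo the radical: from $P\cong P\oplus C$ one obtains $P/\rmJ(\Lambda)P\cong P/\rmJ(\Lambda)P\oplus C/\rmJ(\Lambda)C$ over the semisimple ring $\Lambda/\rmJ(\Lambda)$, where comparing (finite) lengths forces $C/\rmJ(\Lambda)C=0$, i.e. $C=\rmJ(\Lambda)C$, and Nakayama's lemma then gives $C=0$.
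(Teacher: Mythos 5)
Your proof is correct and follows essentially the same route as the paper: the triangle identity $(h_X)^*\circ h_{X^*}=\mathrm{id}_{X^*}$ makes the canonical map of a dual module a split monomorphism, producing $P\cong P\oplus C$ with $C$ finitely generated, and the three hypotheses are each used only to force such a $C$ to vanish (Noetherian/Hopfian in case (1), reduction mod the radical plus Nakayama in case (2) --- the paper invokes Krull--Schmidt where you count lengths --- and the commutative Hopfian property in case (3), where the paper instead localizes and reduces to case (2)). The only cosmetic difference is that you transfer the problem to $M^{**}$ before splitting, whereas the paper splits $h_M$ directly, implicitly using that $M$ is isomorphic to the dual module $(M^*)^*$.
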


To show the above assertion, we need the following. This is well-known, and the proof is standard.

\begin{lem}\label{lemma}
Let $N$ be a right $\Lambda$-module and set $M = N^*$. Then the composite of the homomorphisms
$$
M \overset{h_M}{\to} M^{**} \overset{(h_N)^*}{\to} M
$$
equals the identity $1_{M}$.
\end{lem}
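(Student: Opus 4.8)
The plan is to prove the identity by a direct unwinding of the definitions of the biduality map and of the dual of a homomorphism, so that the whole argument is a diagram chase requiring no hypotheses beyond those in the statement. First I would fix explicit formulas for the maps involved. For any left module $X$ and any $x \in X$, the canonical map $h_X \colon X \to X^{**}$ is evaluation, $h_X(x)(\xi) = \xi(x)$ for $\xi \in X^*$; and for a homomorphism $f \colon X \to Y$ the dual $f^* \colon Y^* \to X^*$ is precomposition, $f^*(\eta) = \eta \circ f$. Since $N$ is a right module, $M = N^*$ is a left module, $N^{**} = M^*$ is again a right module, and $N^{***} = M^{**}$; in particular $(h_N)^* \colon N^{***} = M^{**} \to N^* = M$ is exactly the map on the right of the stated composite, so the composite is an endomorphism of $M$, as it should be.

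The core step is then the computation. Take an arbitrary $m \in M = N^*$, so that $m \colon N \to \Lambda$. Applying $h_M$ gives $h_M(m) \in M^{**}$ with $h_M(m)(\phi) = \phi(m)$ for all $\phi \in M^*$. Applying $(h_N)^*$ gives $(h_N)^*(h_M(m)) = h_M(m) \circ h_N$, which is an element of $N^* = M$, hence again a map $N \to \Lambda$. To identify it, I would evaluate at an arbitrary $n \in N$:
$$
\bigl[h_M(m) \circ h_N\bigr](n) = h_M(m)\bigl(h_N(n)\bigr) = h_N(n)(m) = m(n),
$$
where the second equality is the defining formula for $h_M$ applied to $\phi = h_N(n) \in M^* = N^{**}$, and the third is the defining formula for $h_N$ applied to $m \in N^* = M$. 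Since this holds for every $n$, we conclude $(h_N)^*(h_M(m)) = m$, and as $m$ was arbitrary the composite equals $1_M$.

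The only real care needed — and the one place where an error could creep in — is the bookkeeping of the left/right module structures: one must check that $h_N(n)$ genuinely lies in $N^{**} = M^*$ so that it is a legal argument for $h_M(m)$, and that the evaluation and precomposition maps respect the appropriate one-sided $\Lambda$-actions. These are routine $\Lambda$-linearity verifications; once the conventions are pinned down as above, the chain of equalities is forced, and no finiteness or ring-theoretic hypothesis from Proposition \ref{1} enters. This is precisely why the lemma is valid for an arbitrary module $N$ over an arbitrary ring $\Lambda$.
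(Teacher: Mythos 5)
Your computation is correct and is exactly the standard argument: the paper itself omits the proof of Lemma \ref{lemma}, remarking only that it is ``well-known'' and ``standard,'' and the chain $[(h_N)^*\circ h_M](m)(n)=h_M(m)(h_N(n))=h_N(n)(m)=m(n)$ is precisely that standard verification of the unit--counit (triangle) identity for the duality adjunction. Your bookkeeping of the left/right module structures and the identification $N^{**}=M^*$ is also accurate, so nothing further is needed.
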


\begin{proof}[Proof of Proposition $\ref{1}$]
We have only to show the {\em if} part. Thanks to Lemma \ref{lemma}, we have a split exact sequence
$$
0 \to M \overset{h_M}{\to} M^{**} \to X \to 0
$$
of left $\Lambda$-modules, so that $M \cong M \oplus X$, since $M \cong M^{**}$. Therefore, we get a surjective homomorphism $\varepsilon : M \to M$ with $\Ker \varepsilon = X$. Hence, if Condition (1) is satisfied, then $X= (0)$, so that $M$ is a reflexive $\Lambda$-module. If Condition (2) is satisfied, then, setting $J = \rmJ(\Lambda)$, we get
$$
\Lambda/J \otimes_\Lambda M \cong (\Lambda/J \otimes_\Lambda M) \oplus (\Lambda/J \otimes_\Lambda X)
$$
whence $\Lambda/J \otimes_\Lambda X=(0)$ by Krull-Schmidt's theorem, so that $X = (0)$. Suppose that Condition (3) is satisfied. Then, $M \cong M \oplus X$ as an $R$-module, where $M$ is finitely generated also as an $R$-module. Consequently, for every $\fkp \in \Spec R$, we get $M_\fkp \cong M_\fkp \oplus X_\fkp$ as an $R_\fkp$-module, whence by the case where Condition (2) is satisfied, $X_\fkp=(0)$ for all $\fkp \in \Spec R$. Thus, $X = (0)$, as claimed.
\end{proof}

\begin{cor}\label{cor3}
Let $R$ be a commutative ring and $M$ a finitely generated $R$-module. If $M \cong M^{**}$ as an $R$-module, then $M$ is a reflexive $R$-module.
\end{cor}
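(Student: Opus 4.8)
The plan is to observe that this corollary is precisely the special case $\Lambda = R$ of Proposition \ref{1}, so that nothing new needs to be proved. The single point to verify is that a commutative ring $R$ always satisfies Condition $(3)$ of Proposition \ref{1}: indeed, $R$ is a module-finite algebra over itself, being free of rank one as a module over $R$ (generated by the identity $1 \in R$). This legitimizes viewing $R$ as $\Lambda$ with the ambient commutative ring in Condition $(3)$ taken to be $R$ itself.

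Having made this identification, I would simply feed the hypotheses into Proposition \ref{1}. Since $R$ is commutative, left and right $R$-modules coincide, so the finitely generated $R$-module $M$ is a finitely generated left $\Lambda$-module; and the assumed isomorphism $M \cong M^{**}$ of $R$-modules is exactly the existence of ``at least one isomorphism $M \cong M^{**}$'' demanded by the \emph{if} part of Proposition \ref{1}. Invoking that direction under Condition $(3)$ yields that the canonical map $M \overset{h_M}{\to} M^{**}$ is an isomorphism, that is, $M$ is reflexive. I expect no genuine obstacle here: the entire content is the trivial remark that a commutative ring is a module-finite algebra over itself, after which the corollary follows in one line from Proposition \ref{1}.
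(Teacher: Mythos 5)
Your proposal is correct and matches the paper's intent exactly: the corollary is stated without proof as an immediate consequence of Proposition \ref{1}, and the right way to deduce it for an arbitrary commutative ring (not assumed Noetherian or semi-local) is precisely via Condition $(3)$ with $R$ regarded as a module-finite algebra over itself. Nothing further is needed.
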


\begin{rem}
Let $\Lambda$ be a ring and let $a,b \in \Lambda$ such that $ab=1$ but $ba \ne 1$. We then have the homomorphism 
$$\widehat{b} : {}_\Lambda \Lambda \to {}_\Lambda \Lambda, \ \ \ x \mapsto xb
$$ is surjective but not an isomorphism. Therefore, setting $X = \Ker \widehat{b}$, we get 
$$
{}_\Lambda \Lambda \cong {}_\Lambda \Lambda \oplus X.
$$ 
This example shows that $X$ does not necessarily vanish, even if $M \cong M \oplus X$ and $M$ is a finitely generated module. This example seems also to suggest that Proposition \ref{1} doesn't hold true without any specific conditions on $\Lambda$.
\end{rem}

Let us note one example in order to show how Corollary \ref{cor3} works at an actual spot. See \cite[p.137, the final step of the proof of (4.35) Proposition]{AB} also, where one can find a good opportunity of making use of it, from which the motivation for the present research has come.

\begin{ex}
Let $k[s,t]$ be the polynomial ring over a field $k$ and set $R=k[s^3, s^2t, st^2,t^3]$. Then $R$ is a normal ring and the graded canonical module $\rmK_R$ of $R$ is given by $\rmK_R=(s^2t,s^3)$. We set $I=(s^2t,s^3)$. Then, since $I$ is a reflexive $R$-module, but not $3$-torsionfree in the sense of Auslander-Bridger \cite[(2.15) Definition]{AB} (because $R$ is not a Gorenstein ring), we must have $\Ext_R^1(R:I,R) \ne (0)$  by \cite[(2.17) Theorem]{AB}. In what follows, let us check that $\Ext_R^1(R:I,R) \ne (0)$ directly.

First, consider the exact sequence
$$0 \to R \to R:I \to \Ext_R^1(R/I, R) \to 0
$$
induced from the sequence
$0 \to I \to R \to R/I \to 0$.
Taking the $R$-dual of it again, we get the exact sequence
$$ 0 \to R:(R:I) \to R \to \Ext_R^1(\Ext_R^1(R/I,R),R) \to \Ext_R^1(R:I,R) \to 0,$$
that is 
$$0 \to R/I \overset{\sigma}{\to} \Ext_R^1(\Ext_R^1(R/I,R),R) \to \Ext_R^1(R:I,R) \to 0.$$
Therefore, the homomorphism 
$$\sigma: R/I \to \Ext_R^1(\Ext_R^1(R/I,R),R) $$
should not be an isomorphism. Because 
$$
\Hom_{R/(f)}(\Hom_{R/(f)}(R/I,R/(f)),R/(f)) \cong  \Ext_R^1(\Ext_R^1(R/I,R),R)
$$
for every $0 \ne f \in I$,  thanks to Corollary \ref{cor3}, the assertion that $\sigma$ is not an isomorphism is equivalent to saying that $R/I$ is not a reflexive $R/(f)$-module for some $0 \ne f \in I$. In the following, we shall confirm  that $R/I$ is not a reflexive $R/(s^3)$-module. Before starting work, we would like to note here and emphasize that if we do not make use of Corollary \ref{cor3}, we must certify the above homomorphism $\sigma$ to be induced from the canonical map
$$
R/I \overset{h_{R/I}}{\to} \Hom_{R/(s^3)}(\Hom_{R/(s^3)}(R/I,R/(s^3)),R/(s^3)),
$$
which provably makes a tedious calculation necessary.

We set $T = R/(s^3)$ and $J = (\overline{s^2t},\overline{st^2})$ in $T$, where $\overline{*}$ denotes the image in $T$.  Notice that $\Hom_T(R/I,T) \cong (0):_TI = J$ and $\Hom_T(T/J, T) \cong (0):_TJ = (\overline{s^2t})$. Therefore, from the exact sequence
$$
0 \to J \to T \to T/J \to 0,
$$
we get the exact sequence
$$
0 \to (\overline{s^2t}) \to T \to \Hom_T(J,T) \to \Ext_T^1(T/J, T) \to 0,
$$
that is the exact sequence
$$
\ \ \ \ (\rmE) \ \ \ \ \ 0 \to R/I \to \Hom_T(J,T) \to \Ext_T^1(T/J, T) \to 0,
$$
which guarantees it suffices to show $\Ext_T^1(T/J, T)  \ne (0)$, since $\Hom_T(J,T) = \Hom_T(\Hom_T(R/I,T),T)$.
We now identify 
$$R = k[X,Y,Z,W]/\mathbf{I}_2\left(\begin{smallmatrix}
X&Y&Z\\
Y&Z&W\\
\end{smallmatrix}\right),
$$
where $k[X,Y,Z,W]$ denotes the polynomial ring over $k$, $\mathbf{I}_2(\Bbb M)$ stands for the ideal of $k[X,Y,Z,W]$ generated by the $2\times 2$ minors of a matrix $\Bbb M$,  and $X,Y,Z,W$ correspond to $s^3, s^2t,st^2,t^3$, respectively. We denote by $x,y,z,w$ the images of $X,Y,Z,W$ in $T$. Then, $T/J$ has a $T$-free resolution
$$
\ldots \to T^{\oplus 6} \overset{\left(\begin{smallmatrix}
y&z&0&0&0&0\\
-x&0&w&y&z&0\\
0&-x&-z&0&0&y\\
\end{smallmatrix}\right)}{\longrightarrow} T^3 \overset{\left(\begin{smallmatrix}
x&y&z\\
\end{smallmatrix}\right)}{\longrightarrow} T \to T/J \to 0.
$$
Taking the $T$-dual of the resolution, we have $\left(\begin{smallmatrix}
x\\
0\\
0
\end{smallmatrix}\right)
\in \Ker~ [T^{\oplus 3} \overset{\left(\begin{smallmatrix}
y&-x&0\\
z&0&-x\\
0&w&-z\\
0&y&0\\
0&z&0\\
0&0&y\\
\end{smallmatrix}\right)}{\longrightarrow} T^{\oplus 6}]$, but $\left(\begin{smallmatrix}
y\\
0\\
0
\end{smallmatrix}\right) \ne \alpha \left(\begin{smallmatrix}
x\\
y\\
z\\
\end{smallmatrix}\right)$
 for any $\alpha \in T$. Thus, $\Ext_T^1(T/J,T) \ne (0)$, so that  the exact sequence (E) shows $R/I$ is not a reflexive $T$-module. Hence, by Corollary \ref{cor3} the homomorphism
$$
\sigma: R/I \to \Ext_R^1(\Ext_R^1(R/I,R),R)
$$
is not an isomorphism. Thus, $\Ext_R^1(\rmK_R^*,R) \ne (0)$, and $\rmK_R$ is not $3$-torsionfree.
\end{ex}

\medskip

\end{document}